\newcommand{\R}{\mathbb{R}}
\newcommand{\C}{\mathbb{C}}
\newcommand{\F}{\mathcal{F}}
\newtheorem{lemma}{Lemma}[section]
\newtheorem{remark}{Remark}[section]
\newtheorem{theorem}{Theorem}[section]
\numberwithin{equation}{section}
\newcommand{\qtq}[1]{\quad\text{#1}\quad}
\begin{document}

\title[Large scale limit]{Large scale limit for a dispersion-managed NLS} 
\author{Jason Murphy}
\address{Department of Mathematics, University of Oregon}
\email{jamu@uoregon.edu}

\maketitle

\begin{abstract} We derive the standard power-type NLS as a scaling limit of the Gabitov--Turitsyn dispersion-managed NLS, using the $2d$ defocusing, cubic equation as a model case. In particular, we obtain global-in-time scattering solutions to the dispersion-managed NLS for large scale data of arbitrary $L^2$-norm. 
\end{abstract}

\section{Introduction}

Dispersion-managed nonlinear Schr\"odinger equations comprise a class of partial differential equations arising in the setting of nonlinear fiber optics.  Here `dispersion management' refers to the periodic variation of dispersion along the fiber, which may be modeled via
\[
i\partial_t u + \gamma(t)\Delta u = |u|^2 u,
\] 
with $\gamma:\R\to\R$ a (typically piecewise constant) 1-periodic function \cite{Agrawal, Kurtzke}. Such equations have been the topic of recent mathematical interest (see e.g.  \cite{Agrawal, Kurtzke,CL, Lushnikov, GT1, HL, PZ, ZGJT, CHL, CL, CMVH, MVH, Murphy, AntonelliSautSparber}). In particular, there has been some recent interest in averaging/homogenization phenomena in the setting of dispersion-managed NLS, in which different effective equations are derived under certain limiting regimes.   Without going into the details here, in the `\emph{strong dispersion-management}' regime, one can derive the `Gabitov--Turitsyn' equation
\begin{equation}\label{dmnls}
(i\partial_t +\Delta) u = \int_0^1 e^{-i\sigma\Delta}\bigl[|e^{i\sigma\Delta}u|^2 e^{i\sigma\Delta}u\bigr]\,d\sigma,
\end{equation}
while in the `\emph{fast dispersion-management}' regime, one can derive the standard power-type NLS 
\begin{equation}\label{nls}
(i\partial_t + \Delta) u = |u|^2 u
\end{equation}
(see e.g. \cite{AntonelliSautSparber, CMVH, CL, Murphy, ZGJT}).  In fact, although it is more natural to derive the \emph{focusing} versions of these equations for applications in nonlinear optics, we will restrict to the \emph{defocusing} case throughout this paper (for reasons to be explained shortly).

Given the revelance of \eqref{dmnls} in the nonlinear optics setting (particularly in the investigation of optical solitons), there has been recent interest in obtaining a better mathematical understanding of this model. The contribution of the present note is to show that one can derive the standard power-type NLS \eqref{nls} as a scaling limit of the dispersion-managed NLS \eqref{dmnls} (specifically, in the large scale regime).  To minimize technical complications, we work exclusively with the defocusing, $2d$ cubic model, so that the underlying NLS admits scattering solutions for arbitrary data in $L^2$ (cf. \cite{Dodson}).

The basic idea of the proof is that if an NLS solution has a large characteristic spatial scale (and thus, by the scaling symmetry of \eqref{nls}, a large characteristic timescale), then the effect of the averaging in \eqref{dmnls} should be negligible.  In particular, large scale solutions of NLS will approximately solve \eqref{dmnls}; thus, using a stability result for \eqref{dmnls}, we will be able to construct solutions to \eqref{dmnls} that are close to the NLS solutions.  

Our main result may be stated as follows: 

\begin{theorem}\label{T} Let $\varphi \in L^2(\R^2)$, and let $u$ be the global, scattering solution to \eqref{nls} with $u|_{t=0}=\varphi$. For $\lambda>0$, let
\[
u_\lambda(t,x):=\lambda u(\lambda^2 t,\lambda x),\qtq{with initial data}\varphi_\lambda(x) = \lambda\varphi(\lambda x).
\]
For all $\lambda$ sufficiently small, the solution $v_\lambda$ to \eqref{dmnls} with initial data $\varphi_\lambda$ exists globally in time and scatters\footnote{We say a solution $v$ \emph{scatters} in $L^2$ if there exists $v_\pm\in L^2$ such that \[\lim_{t\to\pm\infty}\|v(t)-e^{it\Delta}v_\pm\|_{L^2}=0.\]} in $L^2$.  Moreover, we have
\begin{equation}\label{main-result}
\lim_{\lambda\to 0}\|u_\lambda - v_\lambda\|_{L_t^\infty L_x^2 \cap L_{t,x}^4(\R\times\R^2)} = 0. 
\end{equation}
\end{theorem}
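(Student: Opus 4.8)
The plan is to treat the rescaled NLS solution $u_\lambda$ as an approximate solution to \eqref{dmnls} and to upgrade it to a genuine solution via the perturbative stability theory for \eqref{dmnls}. Writing $N(f)=|f|^2f$ and $\N(f)=\int_0^1 e^{-i\sigma\Delta}[N(e^{i\sigma\Delta}f)]\,d\sigma$ for the two nonlinearities, I first record that $u_\lambda$ solves \eqref{nls} exactly, hence satisfies \eqref{dmnls} up to the error $e_\lambda:=N(u_\lambda)-\N(u_\lambda)$. Since the $L^2$-critical scaling preserves both $\|\cdot\|_{L_t^\infty L_x^2}$ and $\|\cdot\|_{L_{t,x}^4}$, we get the uniform bound $\|u_\lambda\|_{L_{t,x}^4}=\|u\|_{L_{t,x}^4}<\infty$ from Dodson's theorem, which is exactly the a priori control the stability result demands of the approximate solution. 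Thus, granting the stability result, it suffices to show that the Duhamel integral of the error,
\[
I_\lambda(t):=\int_0^t e^{i(t-s)\Delta}e_\lambda(s)\,ds,
\]
tends to $0$ in $L_t^\infty L_x^2\cap L_{t,x}^4(\R\times\R^2)$ as $\lambda\to0$; global existence, scattering of $v_\lambda$, and \eqref{main-result} then follow from the stability conclusion.

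Next I would carry out the scaling reduction. A Fourier computation gives the identity $e^{i\sigma\Delta}u_\lambda(t,x)=\lambda\,(e^{i\sigma\lambda^2\Delta}u(\lambda^2 t))(\lambda x)$, which exhibits the crucial point: the averaging in $\N$ only ever propagates $u$ over the vanishing time window $\tau=\sigma\lambda^2\in[0,\lambda^2]$. Substituting this identity and undoing the (norm-invariant) scaling reduces the problem to showing that, for the fixed solution $u$,
\[
\sup_{0\le\tau\le\lambda^2}\Big\|\int_0^t e^{i(t-s)\Delta}\big[N(u(s))-e^{-i\tau\Delta}N(e^{i\tau\Delta}u(s))\big]\,ds\Big\|_{L_t^\infty L_x^2\cap L_{t,x}^4}\xrightarrow{\ \lambda\to0\ }0.
\]
I would split the integrand as $e^{i(t-s)\Delta}[N(u)-N(e^{i\tau\Delta}u)]$ plus $(I-e^{-i\tau\Delta})e^{i(t-s)\Delta}N(e^{i\tau\Delta}u)$, that is, into a ``frozen nonlinearity'' difference and an ``outer-propagator'' defect.

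For the first piece the retarded inhomogeneous Strichartz estimate bounds the norm by $\lesssim(\|u\|_{L_{t,x}^4}^2+\|e^{i\tau\Delta}u\|_{L_{t,x}^4}^2)\|u-e^{i\tau\Delta}u\|_{L_{t,x}^4}$, so the task becomes the continuity statement $\|e^{i\tau\Delta}u-u\|_{L_{t,x}^4}\to0$ as $\tau\to0$ (with $\|e^{i\tau\Delta}u\|_{L_{t,x}^4}$ bounded). Using $u(t)=e^{it\Delta}\varphi-i\int_0^t e^{i(t-s)\Delta}N(u(s))\,ds$, the free part of $e^{i\tau\Delta}u-u$ is the time-translation difference $e^{i(t+\tau)\Delta}\varphi-e^{it\Delta}\varphi$, which tends to $0$ in $L_{t,x}^4$ by continuity of translation applied to the Strichartz function $e^{it\Delta}\varphi$; the Duhamel part is $(e^{i\tau\Delta}-I)$ applied to a Duhamel term, controlled by splitting $N(u)$ into a Schwartz piece (handled by strong continuity of the flow) and an $L_{t,x}^{4/3}$-small remainder.

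The main obstacle is precisely this outer propagator. Both the defect $(I-e^{-i\tau\Delta})$ and the remainder estimates collide with the fact that $e^{i\tau\Delta}$ is unbounded on $L_x^{4/3}$ and on $L_x^4$, so it is neither bounded nor strongly continuous on the dual Strichartz space, and a naive density argument cannot control the tail. The resolution, which I regard as the heart of the matter, is never to isolate the outer propagator in $L^{4/3}$ but always to keep it attached to the Duhamel operator and to invoke the inhomogeneous Strichartz estimate with an arbitrary time-lag, $\|\int_0^t e^{i(t-s\mp\tau)\Delta}G(s)\,ds\|_{L_t^\infty L_x^2\cap L_{t,x}^4}\lesssim\|G\|_{L_{t,x}^{4/3}}$ uniformly in $\tau$ (via Christ--Kiselev). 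This absorbs the propagated remainder with a constant independent of $\tau$, while strong continuity of $e^{-i\tau\Delta}$ is reserved for the Schwartz piece; for the $L_t^\infty L_x^2$ component I additionally use that the orbit $\{u(s)-e^{is\Delta}\varphi\}_{s\in\R}$ is relatively compact in $L^2$ (a consequence of scattering) to get equicontinuity under the flow. Feeding the resulting smallness of $I_\lambda$ into the stability result then closes the proof.
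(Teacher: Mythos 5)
Your overall architecture (approximate solution plus stability) matches the paper, but your mechanism for the smallness of the error is genuinely different. The paper proves the analogue of your smallness claim, namely \eqref{WTS}, by Littlewood--Paley truncation: on low frequencies it writes $F_{DM}-F$ as $\int_0^1\int_0^\sigma \partial_\tau H(\tau)\,d\tau\,d\sigma$ with $H(\tau)=e^{-i\tau\Delta}F(e^{i\tau\Delta}S_\lambda u_{\leq N})$, so that two derivatives appear and the scaling identities yield a factor $\lambda^2 N^2$ (this is the treatment of \eqref{low}), while the high-frequency remainders \eqref{high} and \eqref{dm-high} are controlled through $\|u_{>N}\|_{L_{t,x}^4}\lesssim\eta$, obtained from persistence of regularity and NLS stability; one then chooses $N=N(\eta)$ large and $\lambda\leq \eta^{1/2}N^{-1}$. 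Your route instead rescales so that the averaging runs over the shrinking window $\tau\in[0,\lambda^2]$ for the \emph{fixed} solution $u$, and reduces matters to continuity at $\tau=0$, proved by approximating $N(u)$ in $L_{t,x}^{4/3}$ and invoking the lag-uniform inhomogeneous Strichartz estimate --- which is exactly the paper's shifted Strichartz estimate. Your insistence on never detaching the outer propagator from the Duhamel operator is the correct resolution of the unboundedness of $e^{i\tau\Delta}$ on the relevant Lebesgue spaces, and this part of the plan is viable (both arguments are qualitative, consistent with the paper's remark that no rate in $\lambda$ is obtained); your density step roughly plays the role of the paper's frequency truncation, and your continuity-in-$\tau$ statement is the qualitative counterpart of the paper's $\lambda^2N^2$ cancellation.

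There is, however, a genuine mismatch at the stability step. Any stability theory for \eqref{dmnls} must close in the \emph{shifted} norms: estimating $F_{DM}(u)-F_{DM}(v)$ in $L_{t,x}^{4/3}$ produces $\|e^{i\sigma\Delta}(u-v)\|_{L_{t,x}^4}$ for $\sigma\in[0,1]$, and since $e^{i\sigma\Delta}$ is unbounded on $L_{t,x}^4$ --- the very obstruction you identify in your error analysis --- this cannot be bootstrapped from $\|u-v\|_{L_{t,x}^4}$ alone. Accordingly, Theorem~\ref{T:stability} requires $\|e^{i\theta\Delta}u_\lambda\|_{L_\theta^\infty L_{t,x}^4([0,1]\times\R\times\R^2)}\leq L$ and smallness of $\|\int_0^t e^{i(t+\theta-s)\Delta}G\,ds\|_{L_\theta^\infty L_{t,x}^4}$. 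Your assertion that $\|u_\lambda\|_{L_{t,x}^4}=\|u\|_{L_{t,x}^4}$ is ``exactly the a priori control the stability result demands'' is therefore incorrect: the needed bound is the shifted one, which by your own scaling identity equals $\sup_\theta\|e^{i\theta\lambda^2\Delta}u\|_{L_{t,x}^4}$ (slice-wise propagation of $u$, not controlled by scale invariance), but which does follow uniformly in $\lambda$ from the Duhamel formula plus the shifted Strichartz estimate, as in \eqref{av-bd}. Likewise your error quantity $I_\lambda$ must carry an additional lag $\theta\in[0,1]$. Since all your estimates are already uniform in an extra lag, the repair is mechanical, but as written the hypotheses you verify are not the ones the stability theorem consumes.

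Separately, your compactness claim for the $L_t^\infty L_x^2$ component is false: writing $u(s)-e^{is\Delta}\varphi=e^{is\Delta}\Psi(s)$ with $\Psi(s)\to\psi_\pm$ in $L^2$ as $s\to\pm\infty$, along $s_n\to+\infty$ the functions $e^{is_n\Delta}\psi_+$ converge weakly to zero with constant norm, so if $\psi_+\neq 0$ the orbit $\{u(s)-e^{is\Delta}\varphi\}$ has no strongly convergent subsequence and is not relatively compact in $L^2$. Fortunately this claim is unnecessary: by unitarity, $\bigl\|\int_0^t e^{i(t+\theta-s)\Delta}G(s)\,ds\bigr\|_{L_x^2}=\bigl\|\int_0^t e^{-is\Delta}G(s)\,ds\bigr\|_{L_x^2}\lesssim \|G\|_{L_{t,x}^{4/3}}$ uniformly in $t$ and $\theta$, so the $L_t^\infty L_x^2$ part of \eqref{main-result} follows from the same $L_{t,x}^{4/3}$ smallness of your error pieces, exactly as in the remark following Theorem~\ref{T:stability}.
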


As an application of Theorem~\ref{T}, we deduce the existence of global, scattering solutions to \eqref{dmnls} of arbitrarily large $L^2$-norm.   Specifically, any $L^2$ function $\varphi$ rescaled to a sufficiently large spatial scale leads to a scattering solution to \eqref{dmnls}.  While it has been previously shown in \cite{ChoiLeeLeeScattering, KawakamiMurphy} that functions of sufficiently small $L^2$-norm lead to scattering solutions, the large-data scattering problem for this model is still open in general.  We also remark that the limit in \eqref{main-result} does not follow from a quantitative estimate in $\lambda$.  It is possible that this estimate could be made quantitative if one worked with stronger assumptions on the data $\varphi$.

The arguments given here would apply equally well using scattering solutions to the focusing equation.  However, for a non-scattering solution to the underlying equation (e.g. a soliton), the arguments here would only provide approximation for finite times. 

As mentioned above, the idea of the proof is to show that the rescaled solutions to \eqref{nls} approximately solve \eqref{dmnls} in the `large scale' regime and then apply a stability result for \eqref{dmnls} (Theorem~\ref{T:stability} below). The key decomposition in the proof appears in \eqref{low}--\eqref{dm-high} below and is explained there.  The argument is inspired by the work \cite{Ntekoume} on homogenization for the cubic NLS.  This type of argument was also used in \cite{CMVH} to prove averaging for the dispersion-managed NLS.

It is also natural to ask about small-scale limit of \eqref{dmnls}.  Indeed, it is likely that the small-scale limit of \eqref{dmnls} will be important in order to understand blowup phenomena for the dispersion-managed NLS, or even to establish scattering (e.g. using concentration-compactness methods).  A natural candidate for the small-scale limit of \eqref{dmnls} is
\begin{equation}\label{smallscale}
(i\partial_t + \Delta) u = \int_0^\infty e^{-i\sigma\Delta}(|e^{i\sigma\Delta} u|^2 e^{i\sigma\Delta}u)\,d\sigma,
\end{equation}
or perhaps the same equation with the integration taken over $\R$.   Informally, if $u$ has a very short characteristic timescale, then averaging over $\sigma\in[0,1]$ should not be so different from averaging over $\sigma\in[0,\infty)$.

In fact, the latter equation (with integration over $\R$) was already been mentioned in \cite{ChoiHongLee}.  In that work, the authors constructed a soliton solution to the focusing version of this equation (in $1d$ with sufficiently high power nonlinearity) and showed that this solution may be used to define a threshold below which one has a dichotomy between global existence and blowup for the Gabitov--Turitsyn DMNLS.  

In this paper, we focus only on the large-scale limit, which only requires a good understanding of equations \eqref{dmnls} and \eqref{nls}.  We expect that treating the small-scale limit will first require more analysis of the model \eqref{smallscale}.  This will be the topic of future work. 

The rest of this paper is organized as follows:  Section~\ref{S:prelim} introduces notation and collects some preliminary results.  Section~\ref{S:stability} concerns well-posedness and stability for the DMNLS \eqref{dmnls}.  Finally, Section~\ref{S:convergence} contains a proof of the main result, Theorem~\ref{T}. 

\subsection*{Acknowledgements} J. M. was supported by NSF grant DMS-2350225 and Simons Foundation grant MPS-TSM-00006622.  We are grateful to J. Kawakami for helpful discussions related to this work.

\section{Preliminaries}\label{S:prelim}

We use the standard $\lesssim$ notation, as well as the standard notation for mixed Lebesgue space-time norms.  In fact, we will need to consider norms of the form
\[
\| F\|_{L_\theta^a L_t^b L_x^c(I\times J\times\R^d)} := \bigl\| \ \bigl\| \, \|F(\theta,t,x)\|_{L_x^c(\R^d)} \bigr\|_{L_t^b(J)} \bigr\|_{L_\theta^a(I)}. 
\]
The free Schr\"odinger propagator is denoted by $e^{it\Delta}$.  It is the Fourier multiplier operator defined via $e^{it\Delta}=\F^{-1}e^{-it|\xi|^2}\F$. 

We employ the standard Littlewood--Paley frequency decomposition in Section~\ref{S:convergence}.  In particular, we write $u_{\leq N} =P_{\leq N}u$, where $P_{\leq N}$ is (on the Fourier side) a smooth cutoff to $\{|\xi|\leq N\}$.  Similarly, we have $u_{>N}=P_{>N}u = u-P_{\leq N} u$. We utilize standard estimates associated to these operators, e.g. the Bernstein estimates
\[
\|\nabla P_{\leq N}\varphi\|_{L_x^r} \lesssim N \|\varphi\|_{L_x^r},\quad \|P_{>N}\varphi\|_{L_x^r} \lesssim N^{-1}\|\nabla\varphi\|_{L_x^r}
\]
for $r\in[1,\infty]$. 

The work \cite{KawakamiMurphy} utilized a so-called `shifted Strichartz estimate' (inspired by the methods of \cite{Kawakami}), which can be used together with function spaces defined via $e^{i\theta\Delta}u\in L_\theta^\infty L_t^q L_x^r$ to deal efficiently with the nonlinear structure in \eqref{dmnls}. We record the particular estimate we need here. 

\begin{theorem}[Shifted Strichartz estimate, \cite{KawakamiMurphy}] Let $I\ni 0$ be an interval. Then we have
\[
\biggl\| \int_0^t e^{i(t-s+\theta-\sigma)\Delta}F(\sigma,s)\,ds\biggr\|_{L_{t,x}^4(I\times\R^d)} \lesssim \|F(\sigma,t)\|_{L_{t,x}^{\frac{4}{3}}(I\times\R^d)} 
\]
uniformly in $\theta,\sigma\in\R$. 
\end{theorem}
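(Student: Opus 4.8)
The plan is to reduce the estimate to the classical diagonal inhomogeneous Strichartz estimate, tracking precisely how the constant temporal shift $\theta-\sigma$ enters. First I would fix $\theta,\sigma\in\R$, abbreviate $\tau:=\theta-\sigma$, and set $G(s):=F(\sigma,s)$, so that it suffices to establish
\[
\Bigl\| \int_0^t e^{i(t-s+\tau)\Delta}G(s)\,ds\Bigr\|_{L_{t,x}^4(I\times\R^d)} \lesssim \|G\|_{L_{t,x}^{4/3}(I\times\R^d)}
\]
with an implicit constant independent of $\tau$. The only new feature relative to the usual inhomogeneous Strichartz estimate is the fixed shift $\tau$ sitting inside the propagator; everything reduces to showing this shift is harmless.

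My main approach would be the direct dispersive/fractional-integration argument. Applying Minkowski's integral inequality to move the spatial norm inside, and invoking the dispersive bound $\|e^{i\rho\Delta}\|_{L_x^{4/3}\to L_x^4}\lesssim |\rho|^{-\alpha}$ with $\alpha=d(\tfrac12-\tfrac14)=\tfrac d4$ (obtained by interpolating unitarity on $L^2$ against the $L^1\to L^\infty$ decay $|\rho|^{-d/2}$), I would bound
\[
\Bigl\| \int_0^t e^{i(t-s+\tau)\Delta}G(s)\,ds\Bigr\|_{L_x^4} \le \int_0^t |t-s+\tau|^{-\alpha}\,\|G(s)\|_{L_x^{4/3}}\,ds .
\]
Since the integrand is nonnegative, its integral over the interval between $0$ and $t$ is dominated, pointwise in $t$, by the full convolution over $\R$ of $\|G(\cdot)\|_{L_x^{4/3}}$ against the kernel $|\,\cdot+\tau|^{-\alpha}$; in particular no Christ--Kiselev truncation lemma is needed. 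Taking the $L_t^4$ norm and applying the Hardy--Littlewood--Sobolev inequality then closes the estimate once the exponents match, i.e. $\alpha=\tfrac12$, which is exactly the relevant case $d=2$ in which $(4,4)$ is Schr\"odinger-admissible with dual exponent $4/3$.

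The decisive point — and the only place I expect real subtlety — is the uniformity in $\tau$. The shift does \emph{not} commute with the $L_x^4$ norm, so it cannot simply be pulled out front; its sole effect is to translate the singularity of the time-convolution kernel from $\{s=t\}$ to $\{s=t+\tau\}$. Because the Hardy--Littlewood--Sobolev inequality (equivalently, fractional integration in time) is invariant under translation of the kernel, the resulting bound is independent of $\tau$, which yields precisely the claimed uniformity in $\theta,\sigma$.

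As an alternative that makes the shift-absorption especially transparent, I would note a $TT^*$ route: writing $Af(t):=e^{i(t+\tau)\Delta}f$ and $B^*H:=\int_\R e^{-is\Delta}H(s)\,ds$, one has $\int_\R e^{i(t-s+\tau)\Delta}G(s)\,ds=(AB^*)G$. Here $\|B^*\|_{L_{t,x}^{4/3}\to L_x^2}$ is the standard dual homogeneous Strichartz constant, while $\|A\|_{L_x^2\to L_{t,x}^4}$ equals the homogeneous Strichartz constant for $e^{it\Delta}$ by the time-translation invariance of the $L_t^4$ norm, so $A$ carries the shift with a $\tau$-independent bound. Composing gives the full-line estimate, and the retarded version follows from the Christ--Kiselev lemma since $\tfrac43<4$. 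This second argument is cleanest in $d=2$, where the homogeneous endpoint lands precisely in $L_{t,x}^4$.
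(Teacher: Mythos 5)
Your main argument is essentially the paper's own proof: the paper likewise reduces to the dispersive bound $|t-s+\theta-\sigma|^{-\frac12}$ (phrased there by duality against a fixed $G\in L_{t,x}^{4/3}$ of norm one rather than your direct Minkowski step, a cosmetic difference) and closes with Hardy--Littlewood--Sobolev, absorbing the shift exactly as you do via translation invariance of the time-convolution, with no Christ--Kiselev lemma needed; your observation that the exponents force $d=2$ is also consistent with the paper, whose proof silently takes norms over $I\times\R^2$. Your alternative $TT^*$/Christ--Kiselev route is a valid second proof but plays no role in the paper.
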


\begin{proof} While this estimate was proven in more generality in \cite{KawakamiMurphy}, we will include a proof here for the sake of completeness. Note that for $t\in I$ and $s\in[0,t]$ we may write
\[
F(\sigma,s)=\tilde F(\sigma,s):=\chi_I(s)F(\sigma,s). 
\]
We argue by duality.  Fix $G\in L_{t,x}^{\frac43}$ of norm one.  By the dispersive estimate for the free propagator and H\"older's inequality, we have (for any $\theta,\sigma$):
\begin{align*}
\int_I & \int_0^t \int |e^{i(t-s+\theta-\sigma)\Delta}\tilde F(\sigma,s)|\,|G(t,x)|\,dx\,ds\,dt \\
& \lesssim \int_\R \int_\R |t-s+\theta-\sigma|^{-\frac12}\|\tilde F(\sigma,s)\|_{L_x^{\frac43}}\|G(t)\|_{L_x^{\frac43}} \,ds\,dt \\
& \lesssim \int_\R \bigl[K\ast \|\tilde F(\sigma,\cdot)\|_{L_x^{\frac43}}\bigr](t+\theta-\sigma)\|G(t)\|_{L_x^{\frac43}}\,dt, 
\end{align*}
where $K(s)=|s|^{-\frac12}$. Thus, by translation invariance of Lebesgue measure and Hardy--Littlewood--Sobolev, we have
\begin{align*}
\int_I & \int_0^t \int |e^{i(t-s+\theta-\sigma)\Delta}\tilde F(\sigma,s)|\,|G(t,x)|\,dx\,ds\,dt \\
&\lesssim \| K\ast \|\tilde F(\sigma,\cdot)\|_{L_x^{\frac43}}\|_{L_t^4} \|G\|_{L_{t,x}^{\frac43}}\\
& \lesssim \||t|^{-\frac12}\|_{L_t^{2,\infty}} \|F(\sigma,t)\|_{L_{t,x}^{\frac43}(I\times\R^2)} \\
& \lesssim \|F(\sigma,t)\|_{L_{t,x}^{\frac43}(I\times\R^2)},
\end{align*}
as desired. 
\end{proof}

\section{Stability Theory for DMNLS}\label{S:stability}

The goal of this section is Theorem~\ref{T:stability} below, which is a stability result for \eqref{dmnls}.  After introducing the proper function spaces, the proof follows along standard lines (see e.g. \cite{Clay}), utilizing the shifted Strichartz estimate as appropriate.

Throughout this section, we write
\[
F(\psi) = |\psi|^2 \psi \qtq{and} F_{DM}(\psi) = \int_0^1 e^{-i\sigma\Delta}\bigl[ F(e^{i\sigma\Delta}\psi)\bigr]\,d\sigma. 
\]

We first establish local existence in $L^2$ for \eqref{dmnls}. 

\begin{theorem}[Local existence] For any $\varphi\in L^2$, there exists $T>0$ and a unique solution $u:(-T,T)\times\R^2\to\C$ to \eqref{dmnls} with $u|_{t=0}=\varphi$. The solution $u$ may be extended in time as long as the norm
\[
\|e^{i\theta\Delta}u\|_{L_\theta^\infty L_{t,x}^4}
\]
remains finite.
\end{theorem}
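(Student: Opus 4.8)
The plan is to run a standard contraction-mapping argument on the Duhamel formulation of \eqref{dmnls}, but carried out in the shifted function space adapted to the nonlinearity $F_{DM}$. Writing the solution as a fixed point of the map
\[
\Phi(u)(t) = e^{it\Delta}\varphi - i\int_0^t e^{i(t-s)\Delta} F_{DM}(u(s))\,ds,
\]
I would measure $u$ in a norm that accounts for the $\sigma$-integration inside $F_{DM}$. The natural choice, following \cite{KawakamiMurphy}, is to control $\|e^{i\theta\Delta}u\|_{L_\theta^\infty L_{t,x}^4(I\times\R^2)}$ together with $\|u\|_{L_t^\infty L_x^2}$ on a short interval $I=(-T,T)$. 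The point of the shifted Strichartz estimate recorded above is precisely that it lets one absorb the propagators $e^{-i\sigma\Delta}$ and $e^{i\sigma\Delta}$ appearing in the Gabitov--Turitsyn nonlinearity without losing uniformity in the shift parameters $\theta,\sigma$.

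First I would establish the key nonlinear estimate: one must bound $\|e^{i\theta\Delta}\int_0^t e^{i(t-s)\Delta}F_{DM}(u(s))\,ds\|_{L_\theta^\infty L_{t,x}^4}$ by a power of the solution norm. Unfolding the definition of $F_{DM}$, the integrand involves $e^{i(t-s+\theta-\sigma)\Delta}[\,|e^{i\sigma\Delta}u|^2 e^{i\sigma\Delta}u\,]$, so applying the shifted Strichartz estimate (uniformly in $\theta,\sigma$) reduces matters to estimating $\|\,|e^{i\sigma\Delta}u|^2 e^{i\sigma\Delta}u\,\|_{L_{t,x}^{4/3}}$. By H\"older in $(t,x)$ this is controlled by $\|e^{i\sigma\Delta}u\|_{L_{t,x}^4}^3$, and then by taking the supremum over $\sigma\in[0,1]$ (and integrating the $d\sigma$, which is harmless over a unit interval) one bounds everything by $\|e^{i\theta\Delta}u\|_{L_\theta^\infty L_{t,x}^4}^3$. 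The linear term $\|e^{i\theta\Delta}e^{it\Delta}\varphi\|_{L_\theta^\infty L_{t,x}^4}=\|e^{i(t+\theta)\Delta}\varphi\|_{L_\theta^\infty L_{t,x}^4}$ is finite by the standard Strichartz estimate and, crucially, can be made small by shrinking $T$ (via dominated convergence / the vanishing of the Strichartz norm on small time intervals).

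With the nonlinear estimate and its difference analogue in hand, the contraction follows by the usual scheme: on the complete metric space of functions with $\|e^{i\theta\Delta}u\|_{L_\theta^\infty L_{t,x}^4}\le 2\eta$ for suitable small $\eta$, and $T$ chosen so the linear piece is at most $\eta$, the map $\Phi$ is a self-map and a contraction, yielding a unique fixed point. I would also record the $L_x^2$ bound $\|u\|_{L_t^\infty L_x^2}\lesssim \|\varphi\|_{L_x^2}+\|e^{i\theta\Delta}u\|_{L_\theta^\infty L_{t,x}^4}^3$ from the dual Strichartz estimate to place the solution in $C_t L_x^2$. For the blowup criterion, I would argue by a standard continuation argument: as long as $\|e^{i\theta\Delta}u\|_{L_\theta^\infty L_{t,x}^4}$ stays finite on $[0,T^*)$, one can partition the interval into finitely many subintervals on which this norm is small, iterate the local theory, and thereby extend past $T^*$, contradicting maximality unless the norm blows up.

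The main obstacle I anticipate is bookkeeping the interplay of the three time-like variables $\theta$, $t$, and $\sigma$ so that the shifted Strichartz estimate applies cleanly and uniformly. The estimate as stated requires the integrand to appear in the form $e^{i(t-s+\theta-\sigma)\Delta}F(\sigma,s)$; verifying that $F_{DM}$ unfolds into exactly this structure (with $F(\sigma,s)=|e^{i\sigma\Delta}u(s)|^2 e^{i\sigma\Delta}u(s)$) and that the resulting bound is genuinely uniform in both $\theta$ and $\sigma$ before integrating $d\sigma$ over $[0,1]$ is the delicate point. Everything else is routine once this nonlinear estimate is secured.
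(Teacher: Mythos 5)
Your proposal follows essentially the same route as the paper: a contraction mapping for the Duhamel map on the ball $\{u:\|e^{i\theta\Delta}u\|_{L_\theta^\infty L_{t,x}^4}\leq 2\eta\}$, with the nonlinear estimate obtained by unfolding $F_{DM}$, pulling the $d\sigma$-integral outside by Minkowski, applying the shifted Strichartz estimate uniformly in $\theta,\sigma$, and closing with H\"older; the continuation criterion is argued the same way, and your identification of the ``delicate point'' (that $F_{DM}$ unfolds into $e^{i(t-s+\theta-\sigma)\Delta}$ acting on $F(e^{i\sigma\Delta}u)$, with bounds uniform in both shifts) is exactly how the paper proceeds.

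The one step where your write-up is thinner than the paper --- and it is the only step the paper treats as nontrivial --- is the smallness of the linear term, i.e.\ \eqref{small-data}. You assert that $\|e^{i(t+\theta)\Delta}\varphi\|_{L_\theta^\infty L_{t,x}^4}$ can be made small by shrinking $T$ ``via dominated convergence,'' but dominated/monotone convergence applied at each fixed $\theta$ only gives smallness pointwise in $\theta$, while the $L_\theta^\infty$-norm demands uniformity over $\theta\in[0,1]$. The paper closes this by a compactness argument: if \eqref{small-data} failed, one could extract $\theta_n\to\theta_*\in[0,1]$ with $\|e^{it\Delta}\varphi\|_{L_{t,x}^4}\geq\eta_0$ on shrinking windows around $\theta_n$, hence around $\theta_*$, contradicting $e^{it\Delta}\varphi\in L_{t,x}^4(\R\times\R^2)$ and monotone convergence. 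Your claim is nonetheless true and repairable in one line without compactness: setting $g(t)=\|e^{it\Delta}\varphi\|_{L_x^4(\R^2)}^4\in L^1(\R)$ (by Strichartz), absolute continuity of the integral gives $\sup_{\theta\in\R}\int_{\theta-T}^{\theta+T}g(t)\,dt\to 0$ as $T\to 0$, which is uniform in $\theta$ (indeed over all of $\R$, slightly more than the paper's compactness argument yields). With that point patched, your argument is complete and coincides with the paper's.
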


\begin{proof} We follow the usual approach to the `critical' local well-posedness theory.  As the proof follows mostly via standard arguments (incorporating the shifted Strichartz estimate in place of the standard Strichartz estimate as needed), we will keep the presentation brief.  We refer the reader to \cite{Clay} for a more thorough presentation of such arguments.  We also work forward in time only for the sake of simplicity.  

Fix $\varphi\in L^2$.  To get the argument started, we need the following:
\begin{equation}\label{small-data}
\forall\eta>0\ \exists T>0\qtq{such that}\|e^{i\theta\Delta}e^{it\Delta}\varphi\|_{L_{\theta}^\infty L_{t,x}^4([0,1]\times(-T,T)\times\R^2)} < \eta.
\end{equation} 
In the standard case (without the factor $e^{i\theta\Delta}$ and the $L_\theta^\infty$-norm), this follows immediately from Strichartz estimates and the monotone convergence theorem.  We will be able to obtain \eqref{small-data} by exploiting the fact that $\theta$ is restricted to a compact set.

Indeed, suppose \eqref{small-data} fails.  Then (changing variables in $t$) we may find $\eta_0>0$ and a sequence of $\theta_n\in[0,1]$ such that
\[
\|e^{it\Delta}\varphi\|_{L_{t,x}^4((\theta_n-\frac{1}{n},\theta_n+\frac{1}{n})\times\R^2)}\geq \eta_0\qtq{for all}n.
\]
Passing to a subsequence, we may assume that $\theta_n\to\theta_*\in[0,1]$.  In this case, we deduce that
\[
\|e^{it\Delta}\varphi\|_{L_{t,x}^4((\theta_*-\frac{2}{n},\theta_*+\frac{2}{n})\times\R^2)}\geq \eta_0\qtq{for all large}n. 
\]
However, recalling that $e^{it\Delta}\varphi\in L_{t,x}^4(\R\times\R^2)$ (by the Strichartz estimate), this contradicts the monotone convergence theorem.

We now let $\eta>0$ be a small parameter and choose $T$ as in \eqref{small-data}.  We now wish to show that the map
\[
\Phi u(t) = e^{it\Delta}\varphi - i\int_0^t e^{i(t-s)\Delta}\int_0^1 e^{-i\sigma\Delta}(|e^{i\sigma\Delta}u|^2 e^{i\sigma\Delta} u)\,d\sigma\,ds 
\]
is a contraction on the space 
\[
X=\{u: \|e^{i\theta\Delta}u\|_{L_\theta^\infty L_{t,x}^4} \leq 2\eta\}
\]
with distance given by
\[
d(u,v)=\|e^{i\theta\Delta}[u-v]\|_{L_\theta^\infty L_{t,x}^4}.
\]
Here all norms are taken over $[0,1]\times[0,T]\times\R^2$ unless indicated otherwise.  The estimates below should make it clear that we could also include the condition $\|u\|_{L_t^\infty L_x^2}\leq 2C\|\varphi\|_{L_x^2}$ in the definition of $X$, but for the sake of simplicity we will omit this norm from consideration here. 

For $u\in X$, we now use the shifted Strichartz estimate to obtain
\begin{align*}
\|&e^{i\theta\Delta}\Phi u\|_{L_\theta^\infty L_{t,x}^4}\\
 & \lesssim \|e^{i\theta\Delta}e^{it\Delta}\varphi\|_{L_\theta^\infty L_{t,x}^4} + \biggl\|\int_0^t\int_0^1 e^{i(t-s+\theta-\sigma)\Delta}(|e^{i\sigma\Delta}u|^2 e^{i\sigma\Delta} u)\,d\sigma\,ds\biggr\|_{L_{\theta}^\infty L_{t,x}^4} \\
& \lesssim \eta + \int_0^1 \biggl\| \int_0^t e^{i(t-s+\theta-\sigma)\Delta}(|e^{i\sigma\Delta}u|^2 e^{i\sigma\Delta} u)\,ds\biggr\|_{L_\theta^\infty L_{t,x}^4} \,d\sigma\\
& \lesssim \eta + \int_0^1 \| e^{i\sigma\Delta} u\|_{L_{t,x}^{4}}^3\,d\sigma  \lesssim \eta + \|e^{i\sigma\Delta}u\|_{L_\sigma^\infty L_{t,x}^4}^3  \lesssim \eta + \eta^3, 
\end{align*}
so that (for $\eta$ small) we obtain $\Phi:X\to X$.

Taking $u,v\in X$, similar estimates lead to
\begin{align*}
\|e^{i\theta\Delta}&[\Phi u - \Phi v]\|_{L_\theta^\infty L_{t,x}^4} \\
& \lesssim \int_0^1 \| |e^{i\sigma\Delta}u|^2 e^{i\sigma\Delta} u - |e^{i\sigma\Delta}v|^2 e^{i\sigma\Delta} v\|_{L_{t,x}^{\frac43}}\,d\sigma \\
& \lesssim \{\|e^{i\sigma\Delta}u\|_{L_{\sigma}^\infty L_{t,x}^4}^2 + \|e^{i\sigma\Delta}u\|_{L_\sigma^\infty L_{t,x}^4}^2\}\|e^{i\theta\Delta}[u-v]\|_{L_\theta^\infty L_{t,x}^4} \\
& \lesssim \eta \|e^{i\theta\Delta}[u-v]\|_{L_\theta^\infty L_{t,x}^4},
\end{align*}
so that $\Phi$ is a contraction for $\eta$ small.  This yields the existence of the solution $u$.  The estimates above also show that $e^{i\theta\Delta}u\in L_\theta^\infty L_{t,x}^4$ is a controlling norm for this equation, so that as long as this norm remains finite we can continue our solution in time.  \end{proof}

We turn to the stability results for \eqref{dmnls}, beginning with a short-time result. 

\begin{lemma}[Short-time stability]\label{L:short} Suppose $u$ satisfies
\[
\begin{cases} 
(i\partial_t + \Delta) u = F_{DM}(u) + G, \\
u|_{t=0}=u_0\in L^2
\end{cases}
\]
on an interval $I$ with $\inf I = 0$.  Assume that the following estimates hold:
\begin{align*}
&\|e^{i\theta\Delta}u\|_{L_{\theta}^\infty L_{t,x}^4([0,1]\times I\times\R^2)}< \eta_0, \\
&\|e^{i(t+\theta)\Delta}[u_0-v_0]\|_{L_{\theta}^\infty L_{t,x}^4([0,1]\times I\times\R^2)}<\eta, \\
& \biggl\| \int_0^t e^{i(t+\theta-s)\Delta} G(s)\,ds\biggr\|_{L_{\theta}^\infty L_{t,x}^4([0,1]\times I\times\R^2)}<\eta
\end{align*}
for some $v_0\in L^2$ and $0<\eta<\eta_0$. If $\eta_0$ is sufficiently small, then the solution $v$ to \eqref{dmnls} with $v|_{t=0}=v_0$ exists on $I$ and the following estimates hold:
\begin{align*}
&\|e^{i\theta\Delta}(u-v)\|_{L_{\theta}^\infty L_{t,x}^4([0,1]\times I\times\R^2)}\lesssim \eta,\\
& \|F(e^{i\theta\Delta}u)-F(e^{i\theta\Delta}v)\|_{L_\theta^\infty L_{t,x}^{\frac43}([0,1]\times I \times\R^2)}\lesssim \eta. 
\end{align*}
\end{lemma}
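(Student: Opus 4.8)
The plan is to run a standard continuity (bootstrap) argument built on the Duhamel formula for the difference $w := u - v$, using the shifted Strichartz estimate to control the nonlinear interactions. First I would subtract the two Duhamel formulas: since $u$ solves the perturbed equation and $v$ solves \eqref{dmnls}, applying $e^{i\theta\Delta}$ gives
\[
e^{i\theta\Delta} w(t) = e^{i(t+\theta)\Delta}[u_0 - v_0] - i\int_0^t e^{i(t+\theta-s)\Delta}\bigl[F_{DM}(u) - F_{DM}(v)\bigr](s)\,ds - i\int_0^t e^{i(t+\theta-s)\Delta} G(s)\,ds.
\]
The first and third terms on the right are directly bounded by $\eta$ via the second and third hypotheses. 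For the nonlinear difference I would expand $F_{DM}(u) - F_{DM}(v) = \int_0^1 e^{-i\sigma\Delta}[F(e^{i\sigma\Delta}u) - F(e^{i\sigma\Delta}v)]\,d\sigma$, interchange the $s$- and $\sigma$-integrals, and apply the shifted Strichartz estimate for each fixed $\sigma$ to the source term $F(e^{i\sigma\Delta}u) - F(e^{i\sigma\Delta}v)$. Because that estimate is uniform in $\theta,\sigma$, taking $L_\theta^\infty$ and then Minkowski in $\sigma$ yields
\[
\Bigl\| \int_0^t e^{i(t+\theta-s)\Delta}[F_{DM}(u) - F_{DM}(v)]\,ds\Bigr\|_{L_\theta^\infty L_{t,x}^4} \ls \bigl\|F(e^{i\sigma\Delta}u) - F(e^{i\sigma\Delta}v)\bigr\|_{L_\sigma^\infty L_{t,x}^{\frac43}}.
\]

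Next I would estimate this nonlinear difference. Using the cubic bound $|F(a)-F(b)| \ls (|a|^2 + |b|^2)|a - b|$ together with H\"older in $L_{t,x}^{4/3}$ (splitting as $L^4\cdot L^4\cdot L^4$), and then taking $L_\sigma^\infty$, gives
\[
\bigl\|F(e^{i\sigma\Delta}u) - F(e^{i\sigma\Delta}v)\bigr\|_{L_\sigma^\infty L_{t,x}^{\frac43}} \ls \bigl(\|e^{i\sigma\Delta}u\|_{L_\sigma^\infty L_{t,x}^4}^2 + \|e^{i\sigma\Delta}v\|_{L_\sigma^\infty L_{t,x}^4}^2\bigr)\,\|e^{i\theta\Delta} w\|_{L_\theta^\infty L_{t,x}^4}.
\]
Writing $A$ for $\|e^{i\theta\Delta} w\|_{L_\theta^\infty L_{t,x}^4}$ and using the first hypothesis together with the triangle inequality $\|e^{i\sigma\Delta}v\|_{L_\sigma^\infty L_{t,x}^4} \leq \eta_0 + A$, I would combine the above to reach the a priori inequality $A \ls \eta + (\eta_0^2 + A^2)\,A$. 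For $\eta_0$ sufficiently small this is self-improving, which is exactly what a continuity argument needs to close and give $A \ls \eta$.

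The crux is setting up the continuity argument so that it simultaneously produces the existence of $v$ on all of $I$ and the bound on $A$. I would run the bootstrap on increasing subintervals $[0,t^*]\subseteq I$: on any such subinterval where $v$ exists, the controlling norm $\|e^{i\sigma\Delta}v\|_{L_\sigma^\infty L_{t,x}^4} \leq \eta_0 + A$ stays small, so by the blowup criterion in the local existence theorem $v$ extends past $t^*$; meanwhile the inequality $A(t^*) \ls \eta + (\eta_0^2 + A(t^*)^2)\,A(t^*)$, the continuity of $t^* \mapsto A(t^*)$, and $A(0)=0$ force $A(t^*)\ls\eta$ throughout. The main obstacle is precisely this interplay: one must ensure that the smallness of the difference is what keeps the controlling norm of $v$ finite, so that the existence propagates exactly as far as the estimate holds. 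Finally, the second claimed estimate is immediate from the nonlinear difference bound, since it is controlled by $(\eta_0^2 + A^2)\,A \ls \eta$.
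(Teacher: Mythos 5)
Your proposal is correct and follows essentially the same route as the paper: the same Duhamel difference formula for $w=u-v$, the same use of the shifted Strichartz estimate with Minkowski in $\sigma$, the same cubic H\"older bound on the nonlinear difference, and a closing continuity argument (the paper bootstraps the quantity $E(T)=\|F(e^{i\sigma\Delta}(w+u))-F(e^{i\sigma\Delta}u)\|_{L_\sigma^\infty L_{t,x}^{4/3}}$ rather than $A$ directly, but the resulting self-improving inequalities are equivalent). Your explicit handling of the interplay between the a priori bound and the extension of $v$ via the blowup criterion is, if anything, slightly more careful than the paper's remark that it suffices to prove the estimates a priori.
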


\begin{proof} We let $v$ be the solution to \eqref{dmnls} with $v|_{t=0}=u_0$.  The solution may be extended as long as we control the $L_{\theta}^\infty L_{t,x}^4$-norm.  Thus it suffices to assume the existence of $v$ on the full interval $I$ and establish the desired estimates as \emph{a priori} estimates. 

We define $w=u-v$, so that
\[
\begin{cases} (i\partial_t+\Delta) w = F_{DM}(w+u)-F_{DM}(u) + G, \\
w|_{t=0}=u_0-v_0. 
\end{cases}
\]
For $T\in I$, let us define
\[
E(T) = \|F(e^{i\sigma\Delta}(w+u)) - F(e^{i\sigma\Delta}u)\|_{L_\sigma^\infty L_{t,x}^{\frac43}([0,1]\times[0,T]\times\R^2)}. 
\]
Using the assumptions on $u_0-v_0$ and $G$ and the shifted Strichartz estimate, we claim that
\begin{align*}
\|e^{i\theta\Delta} &w\|_{L_{\theta}^\infty L_{t,x}^4([0,1]\times[0,T]\times\R^2)} \\
& \lesssim \eta + \biggl\|\int_0^t e^{i(t+\theta-s)\Delta}[F_{DM}(w+u)-F_{DM}(u)]\,ds\biggr\|_{L_{\theta}^\infty L_{t,x}^4([0,1]\times [0,T]\times\R^2)} \\
& \lesssim \eta + E(T). 
\end{align*}
Indeed, to pass from the second to third line above, we estimate as follows: by the shifted Strichartz estimate and H\"older's inequality, 
\begin{equation}\label{FDM_demo}
\begin{aligned}
 \biggl\|&\int_0^t e^{i(t+\theta-s)\Delta}[F_{DM}(w+u)-F_{DM}(u)]\,ds\biggr\|_{L_{\theta}^\infty L_{t,x}^4([0,1]\times [0,T]\times\R^2)} \\
 & \lesssim\int_0^1 \biggl\| \int_0^t e^{i(t-s+\theta-\sigma)\Delta}[F(e^{i\sigma\Delta}(w+u))-F(e^{i\sigma\Delta}u)]\,ds\biggr\|_{L_{\theta}^\infty L_{t,x}^4([0,1]\times [0,T]\times\R^2)}\,d\sigma \\
 & \lesssim \int_0^1 \| F(e^{i\sigma\Delta}(w+u))-F(e^{i\sigma\Delta}u)\|_{L_\theta^\infty L_{t,x}^{\frac43}([0,1]\times[0,T]\times\R^2)}\,d\sigma \\
 & \lesssim E(T).
\end{aligned}
\end{equation}

To estimate $E(T)$, we use H\"older's inequality, the assumptions on $u$, and the bound on $e^{i\theta\Delta}w$ above to obtain
\begin{align*}
E(T) & \lesssim \|e^{i\sigma\Delta}w\|_{L_{\sigma}^\infty L_{t,x}^4([0,1]\times[0,T]\times\R^2)}^3 + \|e^{i\sigma\Delta}u\|_{L_{\sigma}^\infty L_{t,x}^4}^2 \|e^{i\sigma\Delta}w\|_{L_{\sigma}^\infty L_{t,x}^4([0,1]\times[0,T]\times\R^2)} \\
& \lesssim (\eta+E(T))^3 + \eta_0^2(\eta + E(T)). 
\end{align*}
As $E(0)=0$, a continuity argument implies that for $\eta_0$ sufficiently small, we have $E(T)\lesssim\eta$ for all $t\in I$. Recalling the bound on $e^{i\theta\Delta}w$ above, this leads to both of the desired estimates. \end{proof}

Finally, we prove our main stability result for \eqref{dmnls}.  It is obtained by splitting into intervals on which the norm of the approximate solution is small and iterating the short-time result. 

\begin{theorem}[Long-time stability]\label{T:stability}
Suppose $u$ satisfies
\[
\begin{cases} 
(i\partial_t + \Delta) u = F_{DM}(u) + G, \\
u|_{t=0}=u_0\in L^2
\end{cases}
\]
on an interval $I$ with $\inf I = 0$.  Assume that the following estimates hold:
\begin{align*}
&\|e^{i\theta\Delta}u\|_{L_{\theta}^\infty L_{t,x}^4([0,1]\times I\times\R^2)}\leq L, \\
&\|e^{i(t+\theta)\Delta}[u_0-v_0]\|_{L_{\theta}^\infty L_{t,x}^4([0,1]\times I\times\R^2)}<\eta, \\
& \biggl\| \int_0^t e^{i(t+\theta-s)\Delta} G(s)\,ds\biggr\|_{L_{\theta}^\infty L_{t,x}^4([0,1]\times I\times\R^2)}<\eta
\end{align*}
for some $v_0\in L^2$ and $0<\eta<\eta_0$. If $\eta_0=\eta_0(L)$ is sufficiently small, then the solution $v$ to \eqref{dmnls} with $v|_{t=0}=v_0$ exists on $I$ and the following estimate holds:
\begin{align*}
&\|e^{i\theta\Delta}(u-v)\|_{L_{\theta}^\infty L_{t,x}^4([0,1]\times I\times\R^2)}\lesssim \eta.
\end{align*}

\end{theorem}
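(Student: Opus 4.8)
The plan is to partition $I$ into finitely many subintervals on which the controlling norm of $u$ is small, run the short-time estimate of Lemma~\ref{L:short} on each piece, and accumulate the resulting errors through a discrete Gronwall inequality. The only step that is not completely routine is the subdivision, because the controlling norm $\|e^{i\theta\Delta}u\|_{L_\theta^\infty L_{t,x}^4}$ carries an outer supremum over $\theta\in[0,1]$ and so cannot be split by partitioning a single density in $t$. First I would produce $0=t_0<t_1<\dots<t_J=\sup I$ with $\|e^{i\theta\Delta}u\|_{L_\theta^\infty L_{t,x}^4([0,1]\times[t_{j-1},t_j]\times\R^2)}\le\eta_0$ for each $j$. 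For a \emph{fixed} $\theta$ this is immediate from $\|e^{i\theta\Delta}u\|_{L_{t,x}^4(I\times\R^2)}\le L$ and absolute continuity, costing $\lesssim (L/\eta_0)^4$ breakpoints; to obtain one partition valid for all $\theta$ I would argue exactly as in the proof of \eqref{small-data}, using that $\theta\mapsto e^{i\theta\Delta}u$ is continuous into $L_{t,x}^4(I\times\R^2)$ and that $[0,1]$ is compact. Concretely, cover $[0,1]$ by finitely many balls on which the $L_{t,x}^4(J\times\R^2)$-norm oscillates by at most $\tfrac12\eta_0$ \emph{uniformly} over subintervals $J\subseteq I$ (the modulus is uniform in $J$ since the difference is bounded by the norm over all of $I$), and take the common refinement of the one-parameter partitions attached to the finitely many centers. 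This yields $J=J(L,u)$, which in turn fixes the admissible $\eta_0=\eta_0(L)$.

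For the iteration I write $w=u-v$; the key point is to keep the data and forcing contributions in their \emph{global} Duhamel form based at $t=0$, so that the two smallness hypotheses restrict for free to subintervals, and to split only the nonlinear term across intervals. For $t\in I_j=[t_{j-1},t_j]$ and $\theta\in[0,1]$ one has $e^{i\theta\Delta}w(t)=e^{i(t+\theta)\Delta}(u_0-v_0)-i\int_0^{t_{j-1}}(\cdots)\,ds-i\int_{t_{j-1}}^t(\cdots)\,ds-i\int_0^t e^{i(t-s+\theta)\Delta}G(s)\,ds$, where $(\cdots)=e^{i(t-s+\theta)\Delta}[F_{DM}(u)-F_{DM}(v)]$. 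The first and last terms are $\le\eta$ upon restricting the two global hypotheses to $I_j\subseteq I$. The shifted Strichartz estimate controls the $\int_0^{t_{j-1}}$ piece by $\sum_{k<j}\|F(e^{i\theta\Delta}u)-F(e^{i\theta\Delta}v)\|_{L_\theta^\infty L_{t,x}^{4/3}(I_k)}$ and the $\int_{t_{j-1}}^t$ piece by the same quantity on $I_j$. Setting $S_j:=\|e^{i\theta\Delta}w\|_{L_\theta^\infty L_{t,x}^4(I_j)}$ and using Hölder with $\|e^{i\theta\Delta}u\|_{L_{t,x}^4(I_j)}\le\eta_0$ exactly as in Lemma~\ref{L:short}, the current-interval nonlinear term is $\lesssim \eta_0^2 S_j+S_j^3$ and is absorbed for $\eta_0$ small, while the earlier-interval terms are $\lesssim \sum_{k<j}(\eta_0^2 S_k+S_k^3)\lesssim\sum_{k<j}S_k$. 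This gives $S_j\lesssim \eta+\sum_{k<j}S_k$, hence $S_j\lesssim \eta\,C^{j}\le \eta\,C^{J}$ by discrete Gronwall; summing the $S_j^4$ over the $J=J(L)$ intervals yields $\|e^{i\theta\Delta}(u-v)\|_{L_\theta^\infty L_{t,x}^4(I)}\lesssim\eta$. Global existence of $v$ on $I$ comes along for free: on each $I_j$ the bound on $S_j$ and the smallness of $u$ force $\|e^{i\theta\Delta}v\|_{L_\theta^\infty L_{t,x}^4(I_j)}$ to be small, so the controlling-norm criterion from the local theory continues $v$ across $I_j$, and we iterate through the finitely many intervals.

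I expect the subdivision to be the crux. Splitting a norm with an outer $L_\theta^\infty$ cannot be done pointwise in $t$ --- indeed $\sup_\theta\|e^{i\theta\Delta}u(t)\|_{L_x^4}$ need not even be finite for a.e.\ $t$, since $L^2\not\hookrightarrow L^4$ in $2d$ --- so the compactness-in-$\theta$ argument is essential, and it is what forces the number of intervals, and hence the final implicit constant, to depend on the profile of $u$ rather than on $L$ alone. A secondary point worth isolating is the bookkeeping above: were one to re-base the Duhamel formula at each $t_{j-1}$, the forcing would have to be controlled starting from intermediate times, which the stated hypotheses do not provide; retaining the global form based at $t=0$, and splitting only the nonlinear interaction, is precisely what keeps the argument within the given assumptions.
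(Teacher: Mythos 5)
Your proposal follows essentially the same route as the paper: partition $I$ into finitely many subintervals on which the averaged norm of $u$ is below $\eta_0$, run the short-time perturbative estimate on each piece while keeping the Duhamel formula based at $t=0$ (so the data and forcing hypotheses restrict for free and only the nonlinear interaction is split across intervals), and accumulate the errors inductively with $\eta_0$ chosen small in terms of the number of intervals. Your compactness-in-$\theta$ justification of the subdivision spells out a step the paper leaves implicit (it is the same device used to prove \eqref{small-data}), and your remark that this makes the number of intervals depend on the profile of $u$ rather than on $L$ alone is a fair refinement of the paper's assertion $J=J(L)$; otherwise the argument is the same, with the short-time lemma inlined rather than cited.
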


\begin{proof} Let $\eta_0$ be a small parameter and split $I$ into $J=J(L)$ intervals $I_j=[t_j,t_{j+1}]$ such that
\[
\|e^{i\theta\Delta}u\|_{L_{\theta}^\infty L_{t,x}^4([0,1]\times I_j\times\R^2)} < \eta_0 \qtq{for each}j. 
\]
We will prove by induction that for each $j=0,\dots,J-1$, we have
\begin{align*}
&\|e^{i\theta\Delta}(u-v)\|_{L_{\theta}^\infty L_{t,x}^4([0,1]\times I_j\times\R^2)} \leq C_j\eta, \\
&\|F(e^{i\theta\Delta}u)-F(e^{i\theta\Delta}v)\|_{L_\theta^\infty L_{t,x}^{\frac43}([0,1]\times I_j\times\R^2)} \leq C_j\eta. 
\end{align*}

The base case follows immediately from Lemma~\ref{L:short}.  Now assume the desired estimates hold for $0\leq k\leq j-1$.  We wish to apply Lemma~\ref{L:short} on the interval $I_j=[t_j,t_{j+1}]$.  Using the same notation $w=u-v$ as in the proof of Lemma~\ref{L:short}, this requires that we verify 
\[
\|e^{i(t+\theta-t_j)\Delta}w(t_j)\|_{L_{\theta}^\infty L_{t,x}^4([0,1]\times I_j\times\R^2)} \lesssim \eta. 
\]
We use the Duhamel formula to write
\begin{align*}
e^{i(t+\theta-t_j)\Delta}w(t_j) = e^{i(t+\theta)\Delta}w_0 - i\int_0^{t_j} e^{i(t+\theta-s)\Delta}[F_{DM}(u+w)-F_{DM}(u)-G]\,ds. 
\end{align*}
Recalling the assumptions on $u_0-v_0$ and $G$, it follows that 
\begin{align*}
\|&e^{i(t+\theta-t_j)\Delta}w(t_j) \|_{L_{\theta}^\infty L_{t,x}^4([0,1]\times I_j\times \R^2} \\
& \lesssim \eta + \biggl\| \int_0^{t_j} e^{i(t+\theta-s)\Delta}[F_{DM}(u+w)-F_{DM}(u)]\,ds\biggr\|_{L_{\theta}^\infty L_{t,x}^4([0,1]\times I_j\times\R^2)}. 
\end{align*}

Estimating as we did to obtain \eqref{FDM_demo} and using the inductive hypothesis, we find that
\begin{align*}
 \biggl\| &\int_0^{t_j} e^{i(t+\theta-s)\Delta}[F_{DM}(u+w)-F_{DM}(u)]\,ds\biggr\|_{L_{\theta}^\infty L_{t,x}^4([0,1]\times I_j\times\R^2)} \\
 & \lesssim \int_0^1 \|[F(e^{i\sigma\Delta}(w+u))-F(e^{i\sigma\Delta}u)]\chi_{[0,t_j]}\|_{L_{t,x}^{\frac43}([0,t_{j+1}]\times\R^2)} \,d\sigma \\
 & \lesssim \sum_{k=0}^{j-1} \|F(e^{i\sigma\Delta}(w+u))-F(e^{i\sigma\Delta}u)\|_{L_\sigma^\infty L_{t,x}^{\frac43}([0,1]\times I_k\times\R^2)} \\
 & \lesssim \sum_{k=0}^{j-1}C_k\eta. 
\end{align*}

Choosing $\eta_0=\eta_0(J)$ sufficiently small, we find that the hypotheses of Lemma~\ref{L:short} will be satisfied, and an application of Lemma~\ref{L:short} on the interval $I_j$ then yields the desired conclusions on the interval $I_j$. \end{proof}

\begin{remark} One can also propagate estimates in $L_t^\infty L_x^2$. That is, if we further assume that
\[
\|u_0-v_0\|_{L_x^2} < \eta\qtq{and}\|u\|_{L_t^\infty L_x^2}\lesssim 1 
\]  
and that the bound on $G$ holds in $L_t^\infty L_x^2$, then we can obtain
\[
\|u-v\|_{L_t^\infty L_x^2} \lesssim \eta.
\]
\end{remark}

\section{Convergence}\label{S:convergence}

This section contains the proof of the main result, Theorem~\ref{T}.

\begin{proof}[Proof of Theorem~\ref{T}] We let $u$ be the solution to \eqref{nls} with $u|_{t=0}=\varphi\in L^2$.  We define
\[
u_\lambda(t,x)= S_\lambda u (t,x)   := \lambda u(\lambda^2 t,\lambda x).
\]
We wish to show that $u_\lambda$ define approximate solutions to \eqref{dmnls} in the sense required by Theorem~\ref{T:stability}. 

We first need to show that the $u_\lambda$ satisfy the necessary (averaged) space-time bounds.  We begin by writing
\begin{equation}\label{decomp_with_prop}
e^{i\theta\Delta}u_\lambda(t) = e^{i(t+\theta)\Delta}\varphi_\lambda - i \int_0^t e^{i(t+\theta-s)\Delta}|u_\lambda|^2 u_\lambda\,ds. 
\end{equation}
Thus using the shifted Strichartz estimate (and taking all norms over $[0,1]\times\R\times\R^2$),
\begin{equation}\label{av-bd}
\begin{aligned}
\|e^{i\theta\Delta}u_\lambda\|_{L_{\theta}^\infty L_{t,x}^4} & \lesssim \|e^{i(t+\theta)\Delta}\varphi_\lambda\|_{L_{\theta}^\infty L_{t,x}^4} + \biggl\| \int_0^t e^{i(t+\theta-s)\Delta}|u_\lambda|^2 u_\lambda\,ds\biggr\|_{L_{\theta}^\infty L_{t,x}^4} \\
& \lesssim \| e^{it\Delta}\varphi_\lambda\|_{L_{\theta}^\infty L_{t,x}^4} + \| \,|u_\lambda|^2 u_\lambda\|_{L_\theta^\infty L_{t,x}^{\frac43}} \\
& \lesssim \|e^{it\Delta}\varphi_\lambda\|_{L_{t,x}^4} + \|\,|u_\lambda|^2 u_\lambda\|_{L_{t,x}^{\frac43}} \\ 
& \lesssim \|\varphi\|_{L_x^2} + \|u\|_{L_{t,x}^4}^3 \lesssim 1
\end{aligned}
\end{equation}
uniformly in $\lambda$.  Similarly, using unitarity of the free propagator in $L^2$ and the standard Strichartz estimate, 
\begin{align*}
\|u_\lambda\|_{L_t^\infty L_x^2} & \lesssim \|\varphi\|_{L_x^2} + \biggl\| \int_0^t e^{-is\Delta}|u_\lambda|^2 u_\lambda\,ds\biggr\|_{L_t^\infty L_x^2} \\
& \lesssim \|\varphi\|_{L_x^2} + \|\,|u_\lambda|^2 u_\lambda \|_{L_{t,x}^{\frac43}} \\
& \lesssim \|\varphi\|_{L_x^2} + \|u\|_{L_{t,x}^4}^3  \lesssim 1
\end{align*}
uniformly in $\lambda$.

As before, we write
\[
F(\psi) = |\psi|^2 \psi \qtq{and} F_{DM}(\psi) = \int_0^1 e^{-i\sigma\Delta}\bigl[ F(e^{i\sigma\Delta}\psi)\bigr]\,d\sigma. 
\]

We wish to prove
\begin{equation}\label{WTS}
\lim_{\lambda\to 0} \,\biggl\| \int_0^t e^{i(t+\theta-s)\Delta}[(i\partial_s + \Delta)u_\lambda - F_{DM}(u_\lambda) ]\,ds\biggr\|_{L_{\theta}^\infty L_{t,x}^4([0,1]\times\R\times\R^2)}=0. 
\end{equation}

To this end, let $\eta>0$ and $N\in 2^{\mathbb{Z}}$ to be chosen below.  Using the scaling symmetry for \eqref{nls} and recalling the notation for Littlewood--Paley projections, we may write 
\begin{align}
(i\partial_t + \Delta)u_\lambda - F_{DM}(u_\lambda) & = F(S_\lambda u_{\leq N}) - F_{DM}(S_\lambda u_{\leq N}) \label{low} \\
&\quad + F(S_\lambda u) - F(S_\lambda u_{\leq N}) \label{high}\\
&\quad - [F_{DM}(S_\lambda u)-F_{DM}(S_\lambda u_{\leq N})].\label{dm-high}
\end{align}

The plan is as follows:  In the term \eqref{low}, we will find a cancellation due to the structure of the nonlinearities in the regime $\lambda\to 0$.  However, to exploit this cancellation will require that we estimate higher order derivatives of the solution.  Thus we apply this argument only to the low frequency part of the solution.  The remaining terms \eqref{high} and \eqref{dm-high} will contain a copy of the high frequency part of the solution, which we may make small by choosing the frequency sufficiently large.  In particular, we will first choose $N=N(\eta)$ sufficiently large, and then $\lambda=\lambda(N)$ sufficiently small in the argument below.  We remark that this approach parallels that of \cite{CMVH, Ntekoume} (with the original argument appearing in \cite{Ntekoume}). 

We turn to the details, beginning with the high frequency estimates. We first claim that
\begin{equation}\label{high-control1}
\|u_{>N}\|_{L_{t,x}^4(\R\times\R^2)}\lesssim \eta \qtq{for all}N\qtq{sufficiently large.} 
\end{equation}
We argue as in \cite[Proof of (4.6)]{CMVH}.  We let $N_0$ be large enough that 
\begin{equation}\label{N_0}
\|P_{>N_0}\varphi\|_{L^2}<\eta.
\end{equation}
We let $w$ be the solution to \eqref{nls} with data $P_{\leq N_0}\varphi$.  By scattering and persistence of regularity for \eqref{nls}, the solution $w$ is global in time and satisfies
\[
\|\nabla w\|_{L_{t,x}^4}\lesssim N_0. 
\]
Moreover, since the difference of the initial data of $u$ and $v$ is $P_{>N_0}\varphi$, we have by the stability theory for \eqref{nls} that 
\[
\|u-w\|_{L_{t,x}^4} \lesssim \eta
\]
(for sufficiently small $\eta$). Thus, using Bernstein estimates,
\[
\|u_{>N}\|_{L_{t,x}^4} \lesssim \|u-w\|_{L_{t,x}^4} + N^{-1}\|\nabla w\|_{L_{t,x}^4} \lesssim \eta+N^{-1} N_0.
\]

This yields \eqref{high-control1} for $N\gtrsim N_0\eta^{-1}$.  In fact, we will choose our $N$ large enough that \eqref{high-control1} holds with the choice $N/8$, as well. 

We further claim that
\begin{equation}\label{high-control3}
\|e^{i\theta\Delta}S_\lambda u_{>N}\|_{L_{\theta}^\infty L_{t,x}^4([0,1]\times\R\times\R^2)}\lesssim \eta \qtq{uniformly in}\lambda>0,
\end{equation}
which will be needed to estimate \eqref{dm-high}. To prove this, first note that by scaling, we have
\[
\|e^{i\theta\Delta}S_\lambda u_{>N}\|_{L_{\theta}^\infty L_{t,x}^4([0,1]\times\R\times\R^2)} =\|e^{i\theta\Delta}u_{>N}\|_{L_{\theta}^\infty L_{t,x}^4([0,\lambda^2]\times\R\times\R^2)}.
\]
We now proceed as we did in \eqref{decomp_with_prop}, writing 
\begin{align*}
e^{i\theta\Delta}u_{>N}(t) & = e^{i(t+\theta)\Delta}\varphi_{>N} - iP_{>N} \int_0^t e^{i(t+\theta-s)\Delta}|u|^2 u\,ds \\
& = e^{i(t+\theta)\Delta}\varphi_{>N} - iP_{>N} \int_0^t e^{i(t+\theta-s)\Delta}\O(u^2 u_{>\frac{N}{8}})\,ds,
\end{align*}
where we have used the fact that $P_{>N}$ commutes with the free propagator and $P_{>N}F(u_{\leq\frac{N}{8}})\equiv 0$.  Here $\O(u^2 u_{>\frac{N}{8}})$ represents a linear combination of cubic terms in $u$ and $\bar u$, at least one of which is projected to high frequencies. Thus, applying Strichartz estimates, 
\begin{align*}
&\|e^{i\theta\Delta}u_{>N}\|_{L_{\theta}^\infty L_{t,x}^4([0,\lambda^2]\times \R\times\R^2)} \\
& \lesssim \|e^{it\Delta}\varphi_{>N}\|_{L_{\theta}^\infty L_{t,x}^4[0,\lambda^2]\times \R\times\R^2)} + \| u^2 u_{>\frac{N}{8}}\|_{L_{\theta}^\infty L_{t,x}^{\frac{4}{3}}([0,\lambda^2]\times\R\times\R^2)} \\
& \lesssim \|\varphi_{>N}\|_{L_x^2} + \|u\|_{L_{t,x}^4}^2 \|u_{>\frac{N}{8}}\|_{L_{t,x}^4} \lesssim \eta,
\end{align*} 
uniformly in $\lambda>0$, as desired. 

We may now estimate the contribution of \eqref{high} to \eqref{WTS}. Using the shifted Strichartz estimate, scaling, and \eqref{high-control1}, we have
\begin{align*}
\biggl\| &\int_0^t e^{i(t+\theta-s)\Delta}[F(S_\lambda u)-F(S_\lambda u_{\leq N})]\,ds\biggr\|_{L_{\theta}^\infty L_{t,x}^4} \\
& \lesssim \|F(S_\lambda u)-F(S_\lambda u_{\leq N})\|_{L_{t,x}^{\frac43}} \\
& \lesssim \bigl\{\|S_\lambda u\|_{L_{t,x}^4}^2 + \|S_\lambda u_{\leq N}\|_{L_{t,x}^4}^2\bigr\}\|S_{\lambda} u_{>N}\|_{L_{t,x}^4} \\
& \lesssim \|u\|_{L_{t,x}^4}^2 \|u_{>N}\|_{L_{t,x}^4}  \lesssim \eta  
\end{align*}
uniformly in $\lambda>0$. 

We turn to the contribution of \eqref{dm-high} to \eqref{WTS}. Using the definition of $F_{DM}$, the shifted Strichartz estimate, \eqref{av-bd}, and \eqref{high-control3}, we have
\begin{align*}
\biggl\| &\int_0^t e^{i(t+\theta-s)\Delta}[F_{DM}(S_\lambda u)-F_{DM}(S_\lambda u_{\leq N})]\,ds\biggr\|_{L_{\theta}^\infty L_{t,x}^4} \\
& \lesssim \int_0^1 \biggl\| \int_0^t e^{i(t-s+\theta-\sigma)\Delta}[F(e^{i\sigma\Delta}S_{\lambda} u)-F(e^{i\sigma\Delta}S_{\lambda} u_{\leq N})]\,ds\biggr\|_{L_{\theta}^\infty L_{t,x}^4}\,d\sigma \\
& \lesssim \int_0^1 \|F(e^{i\sigma\Delta}S_{\lambda} u)-F(e^{i\sigma\Delta}S_{\lambda} u_{\leq N})\|_{L_{t,x}^{\frac43}}\,d\sigma \\
& \lesssim \bigl\{ \|e^{i\sigma\Delta} S_\lambda u\|_{L_{\sigma}^\infty L_{t,x}^4}^2 + \|e^{i\sigma\Delta} S_{\lambda}u_{\leq N}\|_{L_{\sigma}^\infty L_{t,x}^4}^2\bigr\}\|e^{i\sigma\Delta} S_{\lambda}u_{>N}\|_{L_{\sigma}^\infty L_{t,x}^4} \\
& \lesssim \eta
\end{align*}
uniformly in $\lambda>0$. 

It remains to estimate the contribution of \eqref{low} to \eqref{WTS}. Let us introduce the notation 
\[
H(\tau)= H_{\lambda,N}(\tau;t,x):=e^{-i\tau\Delta}\bigl[F(e^{i\tau\Delta}S_\lambda u_{\leq N})\bigr]. 
\]
In particular, we may write
\begin{align*}
F_{DM}(S_\lambda u_{\leq N}) - F(S_\lambda u_{\leq N}) & =  \int_0^1 H(\sigma)\,d\sigma - H(0) \\
& = \int_0^1 \int_0^\sigma \partial_\tau H(\tau)\,d\tau\,d\sigma. 
\end{align*}

Writing 
\[
\psi = e^{i\tau\Delta}S_\lambda u_{\leq N},
\]
a direct computation now yields
\begin{align*}
\partial_\tau H(\tau)  & = -2i e^{-i\tau\Delta}[ \bar\psi(\nabla\psi\cdot\nabla\psi)+2|\nabla\psi|^2\psi + \psi^2 \Delta\bar\psi] \\
& = \lambda^2\O\bigl\{ e^{-i\tau\Delta}[ (e^{i\tau\Delta}S_\lambda u_{\leq N})(e^{i\tau\Delta}S_\lambda\nabla u_{\leq N})^2 \\
& \quad\quad\quad\quad + (e^{i\tau\Delta}S_\lambda u_{\leq N})^2 e^{i\tau\Delta}S_\lambda \Delta u_{\leq N}]\bigr\},
\end{align*}
where we have used the identities
\[
\nabla [S_\lambda g] = \lambda S_\lambda\nabla g,\quad \Delta[S_\lambda g]=\lambda^2 S_\lambda\Delta g
\]
and again employed the $\O$ notation. Thus we obtain
\begin{align}
\biggl\| & \int_0^t e^{i(t+\theta-s)\Delta}[F_{DM}(S_\lambda u_{\leq N}) - F(S_\lambda u_{\leq N})]\,ds\biggr\|_{L_{\theta}^\infty L_{t,x}^4} \nonumber \\
& \lesssim \lambda^2\int_0^1 \int_0^\sigma \biggl\| e^{i(t-s+\theta-\tau)\Delta}[(e^{i\tau\Delta}S_\lambda u_{\leq N})(e^{i\tau\Delta}S_\lambda\nabla u_{\leq N})^2]\,ds\biggr\|_{L_{\theta}^\infty L_{t,x}^4}\,d\tau\,d\sigma \label{low1} \\
&\quad +  \lambda^2\int_0^1 \int_0^\sigma \biggl\| e^{i(t-s+\theta-\tau)\Delta}[ (e^{i\tau\Delta}S_\lambda u_{\leq N})^2 e^{i\tau\Delta}S_\lambda \Delta u_{\leq N}]\,ds\biggr\|_{L_{\theta}^\infty L_{t,x}^4}\,d\tau\,d\sigma. \label{low2}
\end{align}
By the shifted Strichartz estimate, we have
\begin{align*}
|\eqref{low1}| & \lesssim \lambda^2 \int_0^1 \int_0^1 \|e^{i\tau\Delta}S_\lambda u_{\leq N}\|_{L_{t,x}^4} \|e^{i\tau\Delta}S_\lambda \nabla u_{\leq N}\|_{L_{t,x}^4}^2 \,d\tau\,d\sigma \\
& \lesssim \lambda^2 \|e^{i\tau\Delta}S_\lambda u_{\leq N}\|_{L_{\tau}^\infty L_{t,x}^4} \|e^{i\tau\Delta}S_{\lambda}\nabla u_{\leq N}\|_{L_{\tau}^\infty L_{t,x}^4}^2. 
\end{align*}
Now we claim that
\[
\|e^{i\tau\Delta}S_\lambda \nabla u_{\leq N}\|_{L_{\tau}^\infty L_{t,x}^4} \lesssim N,\qtq{uniformly in}\lambda>0.
\]
Indeed, we can proceed as above, first writing
\[
\|e^{i\tau\Delta}S_\lambda \nabla u_{\leq N}\|_{L_{\tau}^\infty L_{t,x}^4}=\|e^{i\tau\Delta}\nabla u_{\leq N}\|_{L_{\tau}^\infty L_{t,x}^4([0,\lambda^2]\times\R\times\R^2)}.
\]
We then write
\[
e^{i\tau\Delta} \nabla u_{\leq N} = e^{i(t+\tau)\Delta}\nabla\varphi_{\leq N} + \nabla P_{\leq N}\int_0^{t} e^{i(t+\tau-s)\Delta}F(u_{\leq N})\,ds.
\]
We can then estimate as above, controlling the effect of $\nabla P_{\leq N}$ by $N$ via Bernstein estimates. Similar reasoning leads to
\[
\|e^{i\tau\Delta}S_\lambda \Delta u_{\leq N}\|_{L_{\tau}^\infty L_{t,x}^4}\lesssim N^2,\qtq{uniformly in}\lambda>0. 
\]

Thus, continuing from above, we obtain
\[
|\eqref{low1}| + |\eqref{low2}| \lesssim \lambda^2 N^2. 
\]

Collecting the estimates for the contributions of \eqref{low}, \eqref{high}, and \eqref{dm-high}, we derive
\begin{equation}\label{all_together_bound}
\biggl\| \int_0^t e^{i(t+\theta-s)\Delta}[(i\partial_s + \Delta)u_\lambda - F_{DM}(u_\lambda) ]\,ds\biggr\|_{L_{\theta}^\infty L_{t,x}^4([0,1]\times\R\times\R^2)} \lesssim \eta + \lambda^2 N^2.
\end{equation}
In particular, choosing $N=N(\eta)$ large and then $\lambda\leq \eta^{\frac12} N^{-1}$, we deduce that the left-hand side is controlled by $\eta$.  In particular, the left-hand side tends to zero as $\lambda\to 0$, which is \eqref{WTS}.

It follows that for all $\lambda$ sufficiently small, we may apply the stability result (Theorem~\ref{T:stability}) to deduce the existence of scattering solutions $v_\lambda$ to \eqref{dmnls} with initial data given by $\varphi_\lambda$.  Moreover, an application of this theorem implies that for any $\eta>0$, we have
\[
\|e^{i\theta\Delta}[u_{\lambda}-v_{\lambda}]\|_{L_{\theta}^\infty L_{t,x}^4} \lesssim \text{LHS}\eqref{all_together_bound} \lesssim \eta
\]
for all $\lambda$ sufficiently small.  That is,
\[
\lim_{\lambda\to 0} \|e^{i\theta\Delta}[u_{\lambda}-v_{\lambda}]\|_{L_{\theta}^\infty L_{t,x}^4} = 0. 
\]
Incorporating the $L_t^\infty L_x^2$-norm into the analysis above, we complete the proof.\end{proof}

\end{document}